\def\BC{\mathbb C}
\def\BR{\mathbb R}
\def\cA{\mathcal A}
\def\cD{\mathcal D}
\def\rd{\mathrm d}
\def\Ga{\Gamma}
\def\Om{\Omega}
\def\al{\alpha}
\def\be{\beta}
\def\ga{\gamma}
\def\te{\theta}
\def\ka{\kappa}
\def\la{\lambda}
\def\vp{\varphi}
\def\f{\frac}
\def\nb{\nabla}
\def\ov{\overline}
\def\pa{\partial}
\def\tri{\triangle}
\begin{document}
\mainmatter              
\title{Long-time asymptotic estimate and a related\\
inverse source problem for time-fractional\\
wave equations}
\titlerunning{Asymptotic estimate and an inverse problem for fractional wave equations}  
%
\author{Xinchi Huang\inst{1} \and Yikan Liu\inst{2}}
\authorrunning{Xinchi Huang et al.} 
%
\tocauthor{Xinchi Huang, and Yikan Liu}
\institute{Graduate School of Mathematical Sciences, The University of Tokyo, 3-8-1 Komaba, Meguro-ku, Tokyo 153-8914, Japan,\\
\email{huangxc@ms.u-tokyo.ac.jp}
\and
Research Center of Mathematics for Social Creativity, Research Institute for Electronic Science, Hokkaido University, N12W7, Kita-Ward, Sapporo 060-0812, Japan,\\
\email{ykliu@es.hokudai.ac.jp}
}

\maketitle              

\begin{abstract}
Lying between traditional parabolic and hyperbolic equations, time-fractional wave equations of order $\al\in(1,2)$ in time inherit both decaying and oscillating properties. In this article, we establish a long-time asymptotic estimate for homogeneous time-fractional wave equations, which readily implies the strict positivity/negativity of the solution for $t\gg1$ under some sign conditions on initial values. As a direct application, we prove the uniqueness for a related inverse source problem on determining the temporal component.

\keywords{time-fractional wave equation, asymptotic estimate, inverse source problem, uniqueness}
\end{abstract}


\section{Introduction}\label{sec-intro}

Recent several decades have witnessed the explosive development of nonlocal models based on fractional calculus from various backgrounds. Remarkably, between the fundamental equations of elliptic, parabolic and hyperbolic types, partial differential equations (PDEs) like
\begin{equation}\label{eq-TFE}
(\pa_t^\al-\tri)u=F
\end{equation}
with fractional orders $\al\in(0,1)\cup(1,2)$ of time derivatives have attracted interests from both theoretical and applied sides (the meaning of $\pa_t^\al$ will be specified later). Such time-fractional PDEs have been reported to be capable of describing such phenomena as anomalous diffusion in heterogenous medium and viscoelastic materials that usual PDEs fail to describe (e.g. \cite{BP88,BDES18,HH98}).

Due to the similarity with their integer counterparts, equations like \eqref{eq-TFE} are called time-fractional diffusion equations for $\al\in(0,1)$, while are called time-fractional wave ones for $\al\in(1,2)$. In the last decade, modern mathematical theories have been introduced in the study of time-fractional PDEs, and fruitful results on the well-posedness and important properties of solutions have been established especially for $\al\in(0,1)$ (e.g. \cite{EK04,GLY,KRY,SY} and the references therein). On the contrary, time-fractional wave equations (i.e., \eqref{eq-TFE} for $\al\in(1,2)$) seem not well investigated especially from the viewpoint of their relation with the cases of $\al=1$ and $\al=2$. Meanwhile, many related inverse problems remain open.

In the sequel, let $\al\in(1,2)$, $T>0$ be constants and $\Om\subset\BR^d$ ($d=1,2,\ldots$) be a bounded domain whose boundary $\pa\Om$ is sufficiently smooth. The main focuses of this paper are the following two initial-boundary value problems for homogeneous and inhomogeneous time-fractional wave equations:
\begin{equation}\label{eq-ibvp-u0}
\begin{cases}
(\pa_t^\al+\cA)u=0 & \mbox{in }\Om\times(0,T),\\
u=u_0,\ \pa_t u=u_1 & \mbox{in }\Om\times\{0\},\\
u=0 & \mbox{on }\pa\Om\times(0,T)
\end{cases}
\end{equation}
and
\begin{equation}\label{eq-ibvp-u1}
\begin{cases}
(\pa_t^\al+\cA)u(\bm x,t)=\rho(t)f(\bm x), & (\bm x,t)\in\Om\times(0,T),\\
u=\pa_t u=0 & \mbox{in }\Om\times\{0\},\\
u=0 & \mbox{on }\pa\Om\times(0,T).
\end{cases}
\end{equation}
Here, $\pa_t^\al$ denotes the Caputo derivative in the time variable $t>0$ and $\cA$ is a symmetric elliptic operator in the space variable $\bm x\in\Om$, whose definitions will be provided in Section~\ref{sec-premain} in detail. In the homogeneous problem \eqref{eq-ibvp-u0}, there is no external force and $u_0$, $u_1$ stand for the initial displacement and velocity, respectively. In the inhomogeneous problem \eqref{eq-ibvp-u1}, initial displacement and velocity vanish and the source term takes the form of separated variables, where $\rho(t)$ and $f(\bm x)$ stand for the temporal and spatial components, respectively. In both problems \eqref{eq-ibvp-u0}--\eqref{eq-ibvp-u1}, we impose the homogeneous Dirichlet boundary condition, which can be replaced by homogeneous Neumann or Robin ones.

There are some results on the well-posedness and the vanishing property of \eqref{eq-ibvp-u0}--\eqref{eq-ibvp-u1} in \cite{HY22,LHY21,SY}, which basically inherit those for time-fractional diffusion equations. However, the strong positivity property for $0<\al<1$ (see \cite{L17,LRY}) no longer holds for $1<\al<2$, whose solutions oscillate and change signs in general even with strictly positive initial values. Therefore, we are interested in the sign change of the solution to \eqref{eq-ibvp-u0}.

Indeed, we acquire some hints from the graphs of Mittag-Leffler functions $t^{j-1}E_{\al,j}(-t^\al)$ with $j=0,1$ (see \eqref{eq-def-ML} for a definition) in Figure~\ref{fig-ML}, which are closely related to the solution to \eqref{eq-ibvp-u0}. First, the numbers of sign changes for both functions seem to be finite and monotonely increasing with respect to $\al<2$, and as $\al$ reaches $2$ we know $E_{2,1}(-t^2)=\sin t$ and $t\,E_{2,2}(-t^2)=\cos t$. Next, though both functions tends to $0$ as $t\to\infty$, we see that $E_{\al,1}(-t^\al)<0$ and $t\,E_{\al,2}(-t^\al)>0$ after sufficiently large $t$. Since $E_{\al,1}(-t^\al)$ and $t\,E_{\al,2}(-t^\al)$ coincide with the solutions to \eqref{eq-ibvp-u0} at $x=\pi/2$ with $\Om=(0,\pi)$, $\cA=-\pa_x^2$ and the special choice of
\[
u_0(x)=\sin x,\ u_1(x)=0\quad\mbox{and}\quad u_0(x)=0,\ u_1(x)=\sin x,
\]
respectively, we are concerned with such long-time strict positivity/negativity for \eqref{eq-ibvp-u0} with more general initial values.
\begin{figure}[htbp]\label{fig-ML}
\includegraphics[trim=60mm 15mm 60mm 10mm,clip=true,width=\textwidth]{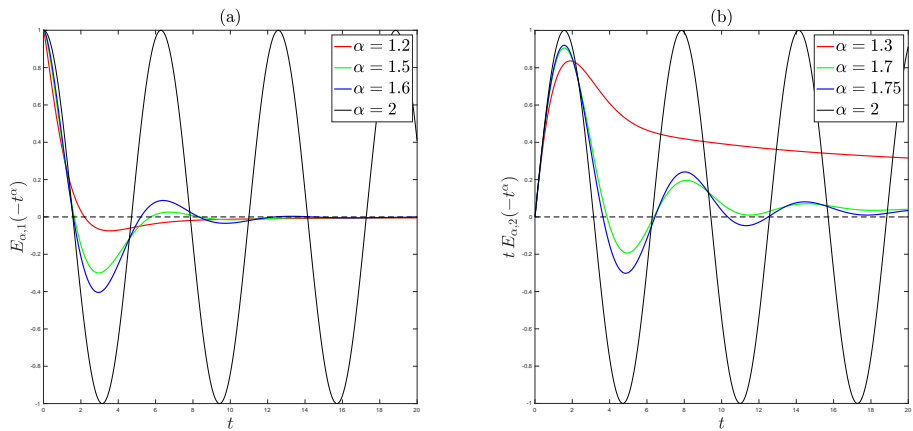}
\caption{Plots of Mittag-Leffler functions $E_{\al,1}(-t^\al)$ (a) and $t\,E_{\al,2}(-t^\al)$ (b) with several choices of $\al\in(1,2]$.}
\end{figure}

As a related topic, we are also interested in the following inverse problem.

\begin{problem}[inverse source problem]\label{isp}
Fix $\bm x_0\in\Om$ and let $u$ be the solution to \eqref{eq-ibvp-u1}. Provided that the spatial component $f$ of the source term is suitably given, determine the temporal component $\rho$ by the single point observation of $u$ at $\{\bm x_0\}\times(0,T)$.
\end{problem}

As before, there is abundant literature on inverse source problem for time-fractional diffusion equations (see \cite{LLY} for a survey), but much less on that for time-fractional wave ones. Moreover, the majority of the latter were treated by uniform methodologies for $0<\al\le2$. We refer to \cite{HLY20,LHY21} for inverse moving source problems, and \cite{KLY} for the inverse source problem on determining $f(\bm x)$ in \eqref{eq-ibvp-u1}. For Problem~\ref{isp}, there are results on uniqueness and stability for $0<\al<1$ (see \cite{L17,LRY,LZ17}), which heavily rely on the strong positivity property of the homogeneous problem \eqref{eq-ibvp-u0}. Thus, we shall consider Problem~\ref{isp} at most with the long-time positivity suggested above for $1<\al<2$.

The remainder of this article is organized as follows. Preparing necessary notations and definitions, in Section~\ref{sec-premain} we state the main results on the long-time asymptotic estimate, strict positivity/negativity and the uniqueness for Problem~\ref{isp}. In Section~\ref{sec-forward}, we show the well-posedness for \eqref{eq-ibvp-u0}--\eqref{eq-ibvp-u1} and establish a fractional Duhamel's principle between them. Then Section~\ref{sec-proof} is devoted to the proof of main results, followed by a brief conclusion in Section~\ref{sec-remark}.


\section{Preliminary and Statement of Main Results}\label{sec-premain}

We start with the definition of the Caputo derivative $\pa_t^\al$ in \eqref{eq-ibvp-u0}--\eqref{eq-ibvp-u1}. Recall the Reimann-Liouville integral operator of order $\be>0$:
\[
J^\be g(t):=\f1{\Ga(\be)}\int_0^t(t-s)^{\be-1}g(s)\,\rd s,\quad g\in C[0,\infty),
\]
where $\Ga(\,\cdot\,)$ is the Gamma function. Then for $1<\al<2$, the pointwise Caputo derivative $\pa_t^\al$ is defined as (e.g. Podlubny \cite{Po})
\[
\pa_t^\al g(t):=\left(J^{2-\al}\circ\f{\rd^2}{\rd t^2}\right)g(t)=\f1{\Ga(2-\al)}\int_0^t\f{g''(s)}{(t-s)^{\al-1}}\,\rd s,\quad g\in C^2[0,\infty),
\]
where $\circ$ is the composite. Notice that the above definition is naive in the sense that it is only valid for smooth functions. For non-smooth functions, recent years there is a modern definition of $\pa_t^\be$ as the inverse of $J^\be$ in the fractional Sobolev space $H_\be(0,T)$ (see e.g. Gorenflo, Luchko and Yamamoto \cite{GLY} for the case of $0<\be<1$ and Huang and Yamamoto \cite{HY22} for that of $1<\be<2$). For instance, the problem \eqref{eq-ibvp-u0} should be formulated e.g. as
\[
\begin{cases}
\pa_t^\al(u-u_0-t\,u_1)+\cA u=0 & \mbox{in }L^2(0,T;H^{-1}(\Om)),\\
u(\,\cdot\,,t)\in H_0^1(\Om), & 0<t<T,\\
u-u_0-t\,u_1\in H_\al(0,T;H^{-1}(\Om))
\end{cases}
\]
in that context. Nevertheless, since the definition of $\pa_t^\al$ is not the main concern of this article, we prefer the traditional formulations \eqref{eq-ibvp-u0}--\eqref{eq-ibvp-u1} for better readability.

Next, we invoke the familiar Mittag-Leffler functions for later use:
\begin{equation}\label{eq-def-ML}
E_{\al,\be}(z):=\sum_{k=0}^\infty\f{z^k}{\Ga(\al k+\be)},\quad z\in\BC,\ \be>0.
\end{equation}
We collect several frequently used estimates for $E_{\al,\be}(z)$.

\begin{lemma}\label{lem-asymp}
Let $\al\in(1,2)$. Then there exist constants $C_0>0$ and $C_0'>0$ depending only on $\al$ such that
\begin{gather}
|E_{\al,\be}(-\eta)|\le\left\{\!\begin{alignedat}{2}
& \f{C_0}{1+\eta}, & \quad & \forall\,\be\in[1,2],\\
& \f{C_0}{1+\eta^2}, & \quad & \be=\al
\end{alignedat}\right.\quad\mbox{for }\eta\ge0,\label{eq-est-ML0}\\
\left|E_{\al,j}(-\eta)-\f1{\Ga(j-\al)\,\eta}\right|\le\f{C_0'}{\eta^2}\quad\mbox{for }\eta\gg1,\ j=1,2.\label{eq-est-ML1}
\end{gather}
\end{lemma}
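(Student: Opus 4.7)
The plan is to derive all three estimates from the classical asymptotic expansion of $E_{\al,\be}$ at infinity along the negative real axis. Specifically, for $\al\in(1,2)$ and any integer $N\ge1$, one has
\[
E_{\al,\be}(-\eta)=-\sum_{k=1}^N\f{(-\eta)^{-k}}{\Ga(\be-\al k)}+O(\eta^{-N-1})\quad\text{as }\eta\to+\infty,
\]
which is a standard consequence of the Hankel-type contour representation of $E_{\al,\be}$ and the fact that for $\al<2$ the ray $\arg z=\pi$ lies in the sector $|\arg z|>\al\pi/2$, where the exponential contribution is absent (see Podlubny). I would quote this expansion as a known result rather than re-derive it.

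Granting the expansion, the sharp estimate \eqref{eq-est-ML1} is immediate: take $N=1$ and note that for $j=1,2$ the leading coefficient $\Ga(j-\al)$ is finite (and nonzero, since $j-\al\in(-1,1)\setminus\{0\}$), hence
\[
\bigl|E_{\al,j}(-\eta)-\tfrac1{\Ga(j-\al)\,\eta}\bigr|\le C_0'\eta^{-2}\quad\text{for }\eta\gg1.
\]
For the first bound in \eqref{eq-est-ML0} with $\be\in[1,2]$, I would split $\eta\in[0,\eta_0]$ and $\eta>\eta_0$ for some threshold $\eta_0$. On $[0,\eta_0]$, $E_{\al,\be}(-\eta)$ is continuous in $(\be,\eta)$ and hence uniformly bounded on the compact set $[1,2]\times[0,\eta_0]$; on $\eta>\eta_0$, the $N=1$ expansion gives $|E_{\al,\be}(-\eta)|\le C\eta^{-1}$ uniformly in $\be\in[1,2]$ provided the implicit constant in the $O$-term is uniform in $\be$ on $[1,2]$. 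Combining the two regimes yields the claimed bound $C_0/(1+\eta)$.

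The sharpened bound for $\be=\al$ is the only delicate point. Here $\Ga(\be-\al)=\Ga(0)$ is singular, so the leading $\eta^{-1}$ coefficient in the expansion vanishes, and taking instead $N=2$ gives
\[
E_{\al,\al}(-\eta)=-\f1{\Ga(-\al)\,\eta^2}+O(\eta^{-3}),\qquad \eta\to+\infty,
\]
so $|E_{\al,\al}(-\eta)|\le C\eta^{-2}$ for large $\eta$; combined with boundedness on a compact interval near $0$ one obtains the estimate $C_0/(1+\eta^2)$. The main (and essentially only) obstacle is ensuring that the remainder in the asymptotic expansion is uniform in $\be$ over $[1,2]$ and, separately, that at $\be=\al$ the exact cancellation of the $\eta^{-1}$ term is captured; both follow from inspecting the Hankel contour representation, which I would cite from \cite{Po} rather than redo.
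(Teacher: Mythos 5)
Your argument is correct and is essentially the paper's own: the authors likewise obtain \eqref{eq-est-ML0} for $\be\in[1,2]$ from \cite[Theorem 1.6]{Po}, and both the sharpened bound at $\be=\al$ (via the cancellation $1/\Ga(0)=0$ of the leading $\eta^{-1}$ term) and \eqref{eq-est-ML1} from the asymptotic expansion in \cite[Theorem 1.4]{Po}, exactly as you do. Your extra remarks on uniformity in $\be$ over the compact set $[1,2]$ and on boundedness near $\eta=0$ only make explicit what the citation to \cite{Po} subsumes.
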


For $\be\ne\al$, the estimate \eqref{eq-est-ML0} follows immediately from \cite[Theorem 1.6]{Po}. Only in the special case of $\be=\al$, one can apply \cite[Theorem 1.4]{Po} with $p=1$ to improve the estimate. Similarly, the estimate \eqref{eq-est-ML1} also follows from the asymptotic estimate of $E_{\al,j}(z)$ with $j=1,2$ in \cite[Theorem 1.4]{Po}.

Now we proceed to the space direction. By $(\,\cdot\,,\,\cdot\,)$ we denote the usual inner product in $L^2(\Om)$, and let $H^\ga(\Om)$ ($\ga>0$) denote Sobolev spaces (e.g. Adams \cite{Ad}). The elliptic operator $\cA$ in \eqref{eq-ibvp-u0}--\eqref{eq-ibvp-u1} is defined by
\[
\cA:H^2(\Om)\cap H_0^1(\Om)\longrightarrow L^2(\Om),\quad g\longmapsto-\nb\cdot(\bm a\nb g)+c\,g,
\]
where $\cdot$ and $\nb$ refer to the inner product in $\BR^d$ and the gradient in $\bm x$, respectively. Here $c\in L^\infty(\Om)$ is non-negative and $\bm a=(a_{i j})_{1\le i,j\le d}\in C^1(\ov\Om;\BR_{\mathrm{sym}}^{d\times d})$ is a symmetric and strictly positive-definite matrix-valued function on $\ov\Om$. More precisely, we assume that
\[
c\ge0\mbox{ in }\Om,\quad a_{i j}\in C^1(\Om),\quad a_{i j}=a_{j i}\mbox{ on }\ov\Om\quad(1\le i,j\le d)
\]
and there exists a constant $\ka>0$ such that
\[
\bm a(\bm x)\bm\xi\cdot\bm\xi\ge\ka(\bm\xi\cdot\bm\xi),\quad\forall\,\bm x\in\ov\Om,\ \forall\,\bm\xi\in\BR^d.
\]
Next, we introduce the eigensystem $\{(\la_n,\vp_n)\}_{n=1}^\infty$ of $\cA$ satisfying
\[
\cA\vp_n=\la_n\vp_n,\quad0<\la_1<\la_2\le\cdots,\quad\la_n\longrightarrow\infty\ (n\to\infty)
\]
and $\{\vp_n\}\subset\cD(\cA)$ forms a complete orthonormal system of $L^2(\Om)$. As usual, we can further introduce the Hilbert space $\cD(\cA^\be)$ for $\be\ge0$ as
\[
\cD(\cA^\be):=\left\{g\in L^2(\Om)\left|\|g\|_{\cD(\cA^\be)}:=\left(\sum_{n=1}^\infty\left|\la_n^\be(g,\vp_n)\right|^2\right)^{1/2}<\infty\right.\right\}.
\]
We know $\cD(\cA^\be)\subset H^{2\be}(\Om)$ for $\be>0$. For $-1\le\be<0$, let
\[
\cD(\cA^{-\be})\subset L^2(\Om)\subset(\cD(\cA^{-\be}))'=:\cD(\cA^\be)
\]
be the Gel'fand triple, where $(\,\cdot\,)'$ denotes the dual space. Then the norm of $\cD(\cA^\be)$ for $-1\le\be<0$ is similarly defined by
\[
\|g\|_{\cD(\cA^\be)}:=\left(\sum_{n=1}^\infty\left|\la_n^\be\,{}_{\cD(\cA^\be)}\langle g,\vp_n\rangle_{\cD(\cA^{-\be})}\right|^2\right)^{1/2},
\]
where ${}_{\cD(\cA^\be)}\langle\,\cdot\,,\,\cdot\,\rangle_{\cD(\cA^{-\be})}$ denotes the pairing between $\cD(\cA^\be)$ and $\cD(\cA^{-\be})$. Then the space $\cD(\cA^\be)$ is well-defined for all $\be\ge-1$.

Now we are well prepared to state the main results of this article. First we investigate the asymptotic behavior of the solution to the homogeneous problem \eqref{eq-ibvp-u0} as $t\to\infty$.

\begin{theorem}[Long-time asymptotic estimate]\label{thm-asymp}
Let $u_j\in\cD(\cA^\be)$ $(j=0,1)$ with $\be\ge0$ and $u$ be the solution to \eqref{eq-ibvp-u0}. Then there exists a constant $C_\al>0$ depending only on $\al$ such that
\begin{equation}\label{eq-asymp}
\left\|u(\,\cdot\,,t)-\sum_{j=0}^1\f{\cA^{-1}u_j}{\Ga(j+1-\al)}t^{j-\al}\right\|_{\cD(\cA^{\be+1})}\le C_\al\sum_{j=0}^1\|u_j\|_{\cD(\cA^{\be-1})}t^{j-2\al}
\end{equation}
for $t\gg1$.
\end{theorem}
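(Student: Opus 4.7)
The plan is to start from the standard Fourier expansion of the solution to \eqref{eq-ibvp-u0} in the eigenbasis of $\cA$,
\[
u(\bm x,t)=\sum_{n=1}^\infty\left[(u_0,\vp_n)E_{\al,1}(-\la_n t^\al)+(u_1,\vp_n)\,t\,E_{\al,2}(-\la_n t^\al)\right]\vp_n(\bm x),
\]
which I will take for granted from the well-posedness discussion in Section~\ref{sec-forward}. The entire proof then consists of substituting the two-term long-time asymptotic \eqref{eq-est-ML1} into each Mittag-Leffler factor, identifying the resulting principal contributions, and controlling the tail by a Parseval-type sum in $\cD(\cA^{\be+1})$.

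Concretely, for $j=1,2$ write $E_{\al,j}(-\eta)=1/(\Ga(j-\al)\eta)+r_j(\eta)$ with $|r_j(\eta)|\le C_0'/\eta^2$, valid for $\eta\gg1$. Evaluated at $\eta=\la_n t^\al$, the principal parts multiplied respectively by $(u_0,\vp_n)\vp_n$ and $t(u_1,\vp_n)\vp_n$ and summed over $n$ combine, using $\cA^{-1}\vp_n=\la_n^{-1}\vp_n$, into exactly $\sum_{j=0}^1\f{t^{j-\al}}{\Ga(j+1-\al)}\cA^{-1}u_j$; this is the leading profile being subtracted in \eqref{eq-asymp}. The remainder
\[
R(\bm x,t)=\sum_{n=1}^\infty\left[(u_0,\vp_n)\,r_1(\la_n t^\al)+(u_1,\vp_n)\,t\,r_2(\la_n t^\al)\right]\vp_n(\bm x)
\]
satisfies, by Parseval and the inequality $(a+b)^2\le2(a^2+b^2)$,
\[
\|R(\cdot,t)\|_{\cD(\cA^{\be+1})}^2\le 2\sum_{n=1}^\infty\la_n^{2(\be+1)}\Bigl(|(u_0,\vp_n)|^2|r_1(\la_n t^\al)|^2+t^2|(u_1,\vp_n)|^2|r_2(\la_n t^\al)|^2\Bigr).
\]
Inserting the bound $|r_j(\la_n t^\al)|^2\le(C_0')^2\la_n^{-4}t^{-4\al}$ cancels four powers of $\la_n$ and leaves the weight $\la_n^{2(\be-1)}$, which is precisely the one defining $\|\,\cdot\,\|_{\cD(\cA^{\be-1})}$; collecting the surviving powers of $t$ gives
\[
\|R(\cdot,t)\|_{\cD(\cA^{\be+1})}^2\le 2(C_0')^2\left(t^{-4\al}\|u_0\|_{\cD(\cA^{\be-1})}^2+t^{2-4\al}\|u_1\|_{\cD(\cA^{\be-1})}^2\right),
\]
and taking square roots yields \eqref{eq-asymp} up to an $\al$-dependent constant.

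The only delicate point I foresee is that \eqref{eq-est-ML1} holds only for $\eta\gg1$, whereas here $\eta=\la_n t^\al$ ranges over all $n\ge1$. The resolution is the spectral gap $\la_1>0$: once $t$ exceeds a threshold depending only on $\al$ and $\la_1$ (this is the meaning of ``$t\gg1$'' in the statement), $\la_n t^\al\ge\la_1 t^\al$ is uniformly large in $n$, so the constant $C_0'$ of Lemma~\ref{lem-asymp} is legitimately $n$-independent and the termwise estimate can be summed against Parseval. This spectral-gap uniformization is the one place where anything beyond bookkeeping is required; everything else reduces to reindexing $\Ga$-factors and tracking powers of $t$.
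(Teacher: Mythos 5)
Your proposal is correct and follows essentially the same route as the paper's proof: expand $u$ in the eigenbasis via \eqref{eq-sol0}, subtract the principal part $1/(\Ga(j+1-\al)\la_n t^\al)$ of each Mittag-Leffler factor using \eqref{eq-est-ML1}, and sum the $O(\la_n^{-2}t^{-2\al})$ remainders by Parseval to land in the $\cD(\cA^{\be-1})$ norm. Your remark on the spectral-gap uniformization of the ``$\eta\gg1$'' condition is a point the paper leaves implicit, and handling it explicitly is a welcome addition.
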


The above theorem generalizes a similar result for multi-term time-fractional diffusion equations in Li, Liu and Yamamoto \cite[Theorem 2.4]{LLY15}, and the estimate \eqref{eq-asymp} keeps the same structure describing the asymptotic behavior. First, \eqref{eq-asymp} points out that the solution $u(\,\cdot\,,t)$ converges to $0$ in $\cD(\cA^{\be+1})$ with the pattern
\[
\sum_{j=0}^1\f{\cA^{-1}u_j}{\Ga(j+1-\al)}t^{j-\al}=\f{\cA^{-1}u_0}{\Ga(1-\al)}t^{-\al}+\f{\cA^{-1}u_1}{\Ga(2-\al)}t^{1-\al}\quad\mbox{as }t\to\infty,
\]
which immediately implies
\[
\|u(\,\cdot\,,t)\|_{\cD(\cA^{\be+1})}=\begin{cases}
O(t^{1-\al}), & u_1\not\equiv0\mbox{ in }\Om,\\
O(t^{-\al}), & u_1\equiv0,\ u_0\not\equiv0\mbox{ in }\Om
\end{cases}\quad\mbox{as }t\to\infty.
\]
Therefore, the initial velocity $u_1$ impacts the long-time asymptotic behavior more than the initial displacement $u_0$. Second, \eqref{eq-asymp} further gives the convergence rate
\[
\left\|u(\,\cdot\,,t)-\sum_{j=0}^1\f{\cA^{-1}u_j}{\Ga(j+1-\al)}t^{j-\al}\right\|_{\cD(\cA^{\be+1})}=\begin{cases}
O(t^{1-2\al}), & u_1\not\equiv0\mbox{ in }\Om,\\
O(t^{-2\al}), & u_1\equiv0,\ u_0\not\equiv0\mbox{ in }\Om
\end{cases}
\]
for $t\gg1$. In this sense, the estimate \eqref{eq-asymp} provides rich information on the long-time asymptotic behavior of the solution.

As a direct consequence of Theorem~\ref{thm-asymp}, one can immediately show the following result on the sign of the solution for $t\gg1$.

\begin{corollary}[Long-time strict positivity/negativity]\label{cor-asymp}
Let $u_j\in\cD(\cA^\be)$ $(j=0,1)$ with
\begin{equation}\label{eq-cond-be}
\be\begin{cases}
=0, & d=1,2,3,\\
>d/4-1, & d\ge4
\end{cases}
\end{equation}
and $u$ be the solution to \eqref{eq-ibvp-u0}. Then for any $\bm x\in\Om,$ the followings hold true.

{\rm(a)} If $\cA^{-1}u_1(\bm x)=0$ and $\cA^{-1}u_0(\bm x)\ne0,$ then there exists a constant $T_0\gg1$ depending on $\al,\Om,\cA,u_0,u_1,\bm x$ such that the sign of $u(\bm x,\,\cdot\,)$ is opposite to that of $\cA^{-1}u_0(\bm x)$ in $(T_0,\infty)$. Especially$,$ if $u_0\not\equiv0$ and $u_0\le0$ $(u_0\ge0)$ in $\Om,$ then $u(\bm x,\,\cdot\,)>0$ $(u(\bm x,\,\cdot\,)<0)$ in $(T_0,\infty)$.

{\rm(b)} If $\cA^{-1}u_1(\bm x)\ne0,$ then there exists a constant $T_1\gg1$ depending on $\al,\Om,\cA,u_0,u_1,\bm x$ such that the sign of $u(\bm x,\,\cdot\,)$ is the same as that of $\cA^{-1}u_1(\bm x)$ in $(T_1,\infty)$. Especially$,$ if $u_1\not\equiv0$ and $u_1\ge0$ $(u_1\le0)$ in $\Om,$ then $u(\bm x,\,\cdot\,)>0$ $(u(\bm x,\,\cdot\,)<0)$ in $(T_1,\infty)$.
\end{corollary}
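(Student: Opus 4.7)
The plan is to upgrade the norm estimate of Theorem~\ref{thm-asymp} to a pointwise one and then to read off the sign from the asymptotically dominant term. First I would exploit condition~\eqref{eq-cond-be}, which is tailored so that $2(\be+1)>d/2$ and therefore yields the Sobolev embedding $\cD(\cA^{\be+1})\subset H^{2(\be+1)}(\Om)\subset C(\ov\Om)$. Evaluating \eqref{eq-asymp} at the fixed point $\bm x\in\Om$ then gives, for $t\gg1$ and some $C>0$,
\[
\left|u(\bm x,t)-\f{\cA^{-1}u_0(\bm x)}{\Ga(1-\al)}t^{-\al}-\f{\cA^{-1}u_1(\bm x)}{\Ga(2-\al)}t^{1-\al}\right|\le C\sum_{j=0}^1\|u_j\|_{\cD(\cA^{\be-1})}t^{j-2\al}.
\]

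Next I would pin down the signs of the Gamma prefactors: for $\al\in(1,2)$ one has $\Ga(1-\al)<0$ (since $1-\al\in(-1,0)$), whereas $\Ga(2-\al)>0$ (since $2-\al\in(0,1)$). In case~(a), the hypothesis $\cA^{-1}u_1(\bm x)=0$ kills the $t^{1-\al}$ contribution, and comparing the surviving leading term $\cA^{-1}u_0(\bm x)\,t^{-\al}/\Ga(1-\al)$ against the $O(t^{-2\al})$ remainder shows that, for $t$ past some threshold $T_0$, the sign of $u(\bm x,t)$ must be opposite to that of $\cA^{-1}u_0(\bm x)$. In case~(b), the ordering $1-\al>-\al>1-2\al$ (valid because $\al>1$) singles out $\cA^{-1}u_1(\bm x)\,t^{1-\al}/\Ga(2-\al)$ as the asymptotic leader over both the $t^{-\al}$ term and the full remainder, and the positivity of $\Ga(2-\al)$ forces its sign to coincide with that of $\cA^{-1}u_1(\bm x)$. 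The thresholds $T_0,T_1$ inherit their dependence on $\al,\Om,\cA,u_0,u_1,\bm x$ from this balancing inequality.

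For the ``especially'' sharpenings I would invoke the strong maximum principle for the symmetric uniformly elliptic operator $\cA$ with zeroth-order coefficient $c\ge0$: if $u_0\le0$ and $u_0\not\equiv0$ in $\Om$, then $v:=\cA^{-1}u_0$ solves $\cA v=u_0\le0$ with $v|_{\pa\Om}=0$, whence $v<0$ throughout $\Om$, and applying part~(a) at any interior $\bm x$ yields $u(\bm x,\,\cdot\,)>0$ on $(T_0,\infty)$; the three remaining sign statements follow by the same recipe. I do not foresee a genuine obstacle beyond those already faced in Theorem~\ref{thm-asymp}: once the asymptotic expansion is at hand, the corollary is essentially bookkeeping, the only delicate points being the sign of $\Ga$ on $(-1,0)$ and the Sobolev threshold \eqref{eq-cond-be}, which is precisely the minimal regularity making the pointwise reading of \eqref{eq-asymp} legitimate.
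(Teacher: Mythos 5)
Your proposal is correct and follows essentially the same route as the paper: the Sobolev embedding forced by \eqref{eq-cond-be} to make \eqref{eq-asymp} pointwise, the sign analysis $\Ga(1-\al)<0$ and $\Ga(2-\al)>0$, the exponent ordering $1-\al>-\al>1-2\al$, and the strong maximum principle for the ``especially'' statements. One minor imprecision: in case (a) the hypothesis is only $\cA^{-1}u_1(\bm x)=0$, not $u_1\equiv0$, so the remainder in \eqref{eq-asymp} is still $O(t^{1-2\al})$ rather than $O(t^{-2\al})$; the leading term $t^{-\al}$ nevertheless dominates because $-\al>1-2\al$ for $\al>1$, which is exactly how the paper argues.
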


Similarly to Theorem~\ref{thm-asymp}, the above corollary also generalizes a similar result for multi-term time-fractional diffusion equations (see Liu \cite[Lemma 3.1]{L17}). However, since solutions to time-fractional wave equations change sign in general, there seems no literature discussing the sign of solutions for $1<\al<2$ to our best knowledge. On the other hand, notice that in Corollary~\ref{cor-asymp} we directly make assumptions on $\cA^{-1}u_j(\bm x)$ ($j=0,1$) instead of $u_j$ as that in \cite{L17}. Since the non-vanishing of $\cA^{-1}u_j(\bm x)$ is a necessary condition of
\begin{equation}\label{eq-SMP}
u_j\not\equiv0\quad\mbox{and}\quad(u_j\ge0\mbox{ or }u_j\le0)\quad\mbox{in }\Om
\end{equation}
according to the strong maximum principle, the assumptions in Corollary~\ref{cor-asymp} are definitely weaker than the previous one.

Under the special situation \eqref{eq-SMP}, let us comment Corollary~\ref{cor-asymp} in further detail. If the initial velocity $u_1$ vanishes and the sign of the initial displacement $u_0(\not\equiv0)$ keeps unchanged, then Corollary~\ref{cor-asymp}(a) asserts that the solution $u$ to the homogeneous problem \eqref{eq-ibvp-u0} must take the opposite sign against that of $u_0$ for $t\gg1$. This turns out to be the remarkable difference from the case of $0<\al\le1$ in view of the strong positivity property for the latter.

In contrast, if the sign of $u_1(\not\equiv0)$ keeps unchanged, then Corollary~\ref{cor-asymp}(b) claims that $u$ must takes the same sign as that of $u_1$ for $t\gg1$. Notice that in Corollary~\ref{cor-asymp}(b), there is no assumption on $u_0$ because $u_1$ plays a more dominating role in the asymptotic estimate \eqref{eq-asymp} than $u_0$ does.

Corollary~\ref{cor-asymp} is not only novel and interesting by itself, but also closely related to the uniqueness issue of Problem \ref{isp}. Indeed, as a direct application of Corollary~\ref{cor-asymp}, one can prove the following theorem.

\begin{theorem}[Uniqueness for Problem \ref{isp}]\label{thm-isp}
Let $\bm x_0\in\Om$ and $u$ be the solution to $\eqref{eq-ibvp-u1},$ where $\rho\in L^p(0,T)$ $(1\le p\le\infty)$ and $f\in\cD(\cA^\be)$ with $\be$ satisfying \eqref{eq-cond-be}. If $\cA^{-1}f(\bm x_0)\ne0,$ then $u=0$ at $\{\bm x_0\}\times(0,T)$ implies $\rho\equiv0$ in $(0,T)$. Especially$,$ if $f\not\equiv0$ and $(f\ge0\mbox{ or }f\le0)$ in $\Om,$ then the same result holds with arbitrary $\bm x_0\in\Om$.
\end{theorem}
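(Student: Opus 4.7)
The plan is to reduce Theorem~\ref{thm-isp} to an application of Titchmarsh's convolution theorem, via the fractional Duhamel principle of Section~\ref{sec-forward} combined with the asymptotic information supplied by Corollary~\ref{cor-asymp}.

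First, I would invoke the Duhamel principle to rewrite the solution as the convolution
\[
u(\bm x,t)=(K(\bm x,\,\cdot\,)*\rho)(t)=\int_0^tK(\bm x,t-s)\,\rho(s)\,\rd s,
\]
where the kernel $K(\bm x,t)=J^{\al-1}w(\bm x,t)$ is obtained by a Riemann--Liouville integration of order $\al-1$ from the solution $w$ to the homogeneous problem \eqref{eq-ibvp-u0} with initial values $u_0=f$, $u_1=0$. Equivalently, by eigenfunction expansion,
\[
K(\bm x,t)=\sum_{n=1}^\infty(f,\vp_n)\vp_n(\bm x)\,t^{\al-1}E_{\al,\al}(-\la_n t^\al).
\]
Then the hypothesis $u(\bm x_0,\,\cdot\,)\equiv0$ on $(0,T)$ reads $(K(\bm x_0,\,\cdot\,)*\rho)(t)=0$ on $(0,T)$, so Titchmarsh's theorem reduces everything to establishing $\inf\supp K(\bm x_0,\,\cdot\,)=0$.

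Second, I would show that $K(\bm x_0,\,\cdot\,)$ is real-analytic on $(0,\infty)$ and not identically zero. Real-analyticity follows because each summand $t^{\al-1}E_{\al,\al}(-\la_n t^\al)$ is real-analytic in $t>0$ (the product of a power with the entire function $E_{\al,\al}$ evaluated at the analytic argument $-\la_n t^\al$), while the assumption $f\in\cD(\cA^\be)$ with $\be$ as in \eqref{eq-cond-be} and Sobolev embedding provide enough decay of $(f,\vp_n)\vp_n(\bm x_0)$ for uniform convergence of termwise-differentiated series on compact subsets of $(0,\infty)$. To rule out $K(\bm x_0,\,\cdot\,)\equiv0$, I would apply Corollary~\ref{cor-asymp}(a) to $w$: since $\cA^{-1}u_1(\bm x_0)=0$ and $\cA^{-1}u_0(\bm x_0)=\cA^{-1}f(\bm x_0)\ne0$ by hypothesis, the corollary gives $w(\bm x_0,t)\ne0$ for all $t$ beyond some $T_0\gg1$, so in particular $w(\bm x_0,\,\cdot\,)\not\equiv0$. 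Injectivity of the Riemann--Liouville integral $J^{\al-1}$ then propagates this non-triviality to $K(\bm x_0,\,\cdot\,)$, and real-analyticity forces $\inf\supp K(\bm x_0,\,\cdot\,)=0$.

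Titchmarsh's theorem now gives $\supp\rho\subset[T,\infty)$, that is, $\rho\equiv0$ on $(0,T)$. For the addendum concerning signed $f\not\equiv0$, the strong maximum principle applied to the elliptic Dirichlet problem $\cA g=f$ in $\Om$ with $g|_{\pa\Om}=0$ ensures that $g=\cA^{-1}f$ has strict sign throughout $\Om$, whence $\cA^{-1}f(\bm x_0)\ne0$ for every $\bm x_0\in\Om$ and the preceding argument applies pointwise. I expect the main obstacle to be the rigorous handling of the Duhamel representation when $\rho$ is merely in $L^p(0,T)$ — namely, justifying termwise operations on the eigenfunction expansion and controlling the $L^1_{\loc}$-behavior of the kernel across $t=0$ (harmless here because $t^{\al-1}\in L^1_{\loc}$); the invocation of Titchmarsh's theorem and Corollary~\ref{cor-asymp} is routine once the convolution representation is placed on firm footing.
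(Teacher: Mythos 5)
Your proposal is correct in substance and follows the same skeleton as the paper's proof: a Duhamel-type convolution representation, the Titchmarsh convolution theorem, time-analyticity of the kernel, and the long-time sign property of Corollary~\ref{cor-asymp} to rule out a vanishing kernel; the treatment of the signed case via the strong maximum principle is also exactly the paper's. The difference is in which kernel you convolve with. You keep the raw kernel $K(\bm x,t)=\sum_n(f,\vp_n)\vp_n(\bm x)\,t^{\al-1}E_{\al,\al}(-\la_n t^\al)$, identify it as $J^{\al-1}w$ with $w$ solving \eqref{eq-ibvp-u0} for $(u_0,u_1)=(f,0)$, and invoke Corollary~\ref{cor-asymp}(a). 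The paper instead applies $J^{2-\al}$ to both sides of the convolution identity, which turns the kernel into $v=J^{2-\al}K$, the homogeneous solution with $(u_0,u_1)=(0,f)$ (Lemma~\ref{lem-Duhamel}), and then uses Corollary~\ref{cor-asymp}(b). The paper's choice is the technically cleaner one, because $v$ is continuous up to $t=0$ and its analyticity and membership in $L^1(0,T;\cD(\cA^{\be+\ga}))$ are already supplied by Lemma~\ref{lem-reg1}.

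The one step you should tighten is the appeal to ``injectivity of $J^{\al-1}$.'' With $f$ only in $\cD(\cA^\be)$ for the minimal $\be$ of \eqref{eq-cond-be}, the pointwise trace $w(\bm x_0,t)$ is controlled only by $\|w(\cdot,t)\|_{\cD(\cA^{\be+\ga})}\lesssim t^{-\al\ga}$ with $2(\be+\ga)>d/2$, and $\al\ga$ may exceed $1$; so $w(\bm x_0,\cdot)$ need not be locally integrable at $t=0$, and the identity $K(\bm x_0,\cdot)=J^{\al-1}w(\bm x_0,\cdot)$ together with the injectivity argument is not immediate. This is easily repaired: to show $K(\bm x_0,\cdot)\not\equiv0$, apply $J^{2-\al}$ termwise (justified since $\|K(\cdot,t)\|_{\cD(\cA^{\be+\ga})}\lesssim t^{\al-1-\al\ga}\in L^1(0,T)$) to get $J^{2-\al}K(\bm x_0,\cdot)=v(\bm x_0,\cdot)$, which cannot vanish identically by Corollary~\ref{cor-asymp}(b) since $\cA^{-1}f(\bm x_0)\ne0$. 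With that substitution your argument closes, and in fact collapses onto the paper's. A second, smaller point: the analyticity of $K(\bm x_0,\cdot)$ on $(0,\infty)$ is best argued directly on the eigenfunction series (as you sketch), rather than by transporting analyticity of $w$ through the singular convolution $J^{\al-1}$.
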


As expected, again the above theorem generalizes and improves corresponding results for (multi-term) time-fractional diffusion equations (see Liu, Rundell and Yamamoto \cite[Theorem 1.2]{LRY} and Liu \cite[Theorem 1.3]{L17}). Moreover, in spite of the difference between time-fractional diffusion and wave equations, the assumption and result keep essentially the same. Therefore, the key ingredient for proving such uniqueness turns out to be the long-time strict positivity/negativity like Corollary~\ref{cor-asymp} instead of the strong positivity property. Further, thanks to the weakened assumption in Corollary~\ref{cor-asymp}(b), here in Theorem~\ref{thm-isp} we can also weaken the sign assumption on $f$ simply to $\cA^{-1}f(\bm x_0)\ne0$.


\section{Well-Posedness and Fractional Duhamel's Principle}\label{sec-forward}

This section is devoted to the preparation of basic facts concerning problems \eqref{eq-ibvp-u0}--\eqref{eq-ibvp-u1} before proceeding to the proofs of main results.

We start with discussing the unique existence and regularity of solutions to \eqref{eq-ibvp-u0}--\eqref{eq-ibvp-u1}. Regarding the well-posedness of time-fractional wave equations, there are partial results e.g. in \cite{LHY21,SY} and here we generalize their results to fit into the framework of this paper. First we consider the homogeneous problem \eqref{eq-ibvp-u0}.

\begin{lemma}\label{lem-reg1}
Fix constants $\be\ge0,$ $\ga\in[0,1]$ arbitrarily and assume $u_j\in\cD(\cA^\be)$ $(j=0,1)$. Then there exists a unique solution $u\in L^\infty(0,T;\cD(\cA^\be))$ to the initial-boundary value problem \eqref{eq-ibvp-u0}. Moreover$,$ there exists a constant $C_1>0$ depending only on $\al$ such that
\begin{equation}\label{eq-est1}
\|u(\,\cdot\,,t)\|_{\cD(\cA^{\be+\ga})}\le C_1\sum_{j=0}^1\|u_j\|_{\cD(\cA^\be)}t^{j-\al\ga},\quad t>0.
\end{equation}
Further$,$ the map $u:(0,T)\longrightarrow\cD(\cA^{\be+1})$ can be analytically extended to a sector $\{z\in\BC\setminus\{0\}\mid|\mathrm{arg}\,z|<\pi/2\}$.
\end{lemma}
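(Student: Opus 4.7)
The plan is the standard Fourier method based on the eigensystem $\{(\la_n,\vp_n)\}$ of $\cA$. Expanding $u_j=\sum_{n=1}^\infty(u_j,\vp_n)\vp_n$ ($j=0,1$) and noting that $E_{\al,1}(-\la_n t^\al)$ and $t\,E_{\al,2}(-\la_n t^\al)$ are the two fundamental solutions of the scalar Caputo equation $\pa_t^\al v+\la_n v=0$ with initial pairs $(v(0),v'(0))=(1,0)$ and $(0,1)$ respectively (which follows termwise from $\pa_t^\al t^{\al k+j-1}=\f{\Ga(\al k+j)}{\Ga(\al(k-1)+j)}t^{\al(k-1)+j-1}$ for $k\ge1$, together with $\pa_t^\al 1=\pa_t^\al t=0$), my candidate for the solution is
\[
u(\bm x,t)=\sum_{n=1}^\infty\bigl[E_{\al,1}(-\la_n t^\al)(u_0,\vp_n)+t\,E_{\al,2}(-\la_n t^\al)(u_1,\vp_n)\bigr]\vp_n(\bm x),
\]
for which the boundary and initial conditions are immediate from $\vp_n|_{\pa\Om}=0$, $E_{\al,1}(0)=1$, and the values of $\pa_t E_{\al,1}$ and $\pa_t[t\,E_{\al,2}]$ at $t=0$.

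For the estimate \eqref{eq-est1}, I would combine the elementary inequality $x^\ga\le 1+x$ (valid for $\ga\in[0,1]$ and $x\ge0$) with \eqref{eq-est-ML0} to obtain
\[
\la_n^\ga|E_{\al,1}(-\la_n t^\al)|\le C_0\,t^{-\al\ga},\qquad \la_n^\ga\bigl|t\,E_{\al,2}(-\la_n t^\al)\bigr|\le C_0\,t^{1-\al\ga},\quad t>0.
\]
Squaring and summing against $\la_n^{2\be}|(u_j,\vp_n)|^2$ yields \eqref{eq-est1} directly, and in particular (case $\ga=0$) the membership $u\in L^\infty(0,T;\cD(\cA^\be))$. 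The same bound with $\ga=1$ controls $\cA u(\cdot,t)$ uniformly in $\cD(\cA^\be)$ on each $[\ve,T]$, which legitimizes the termwise application of $\pa_t^\al$ and of $\cA$ and confirms that the series does solve \eqref{eq-ibvp-u0}. Uniqueness follows by linearity: for $u_0=u_1=0$, each Fourier mode $(u(\cdot,t),\vp_n)$ satisfies a scalar homogeneous Caputo equation with vanishing initial data and is therefore identically zero.

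For the analytic extension, each summand $E_{\al,1}(-\la_n z^\al)$ and $z\,E_{\al,2}(-\la_n z^\al)$ is holomorphic in $z$ on the right half-plane $|\arg z|<\pi/2$, where the principal branch of $z^\al$ is single-valued. Combining the complex-argument analogue of \eqref{eq-est-ML0} from \cite[Theorem~1.6]{Po} with the summation argument above yields uniform convergence of the series in $\cD(\cA^{\be+1})$ on compact subsets of the half-plane, and hence holomorphy of the limit. The main obstacle I anticipate is precisely this last step: extracting an $n$-uniform bound of the form $|E_{\al,\be}(-\la_n z^\al)|\le C/(1+\la_n|z|^\al)$ on closed subsectors requires inspecting the contour-integral representation of $E_{\al,\be}$ with care, since \eqref{eq-est-ML0} is stated only for negative real inputs and the sector in which the decay $|E_{\al,\be}(w)|\lesssim 1/|w|$ is genuinely available is somewhat restricted when $\al>1$.
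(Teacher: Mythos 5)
Your proposal is correct and follows essentially the same route as the paper: the eigenfunction representation \eqref{eq-sol0}, the bound $\la_n^\ga|E_{\al,j+1}(-\la_n t^\al)|\le C_0\,t^{-\al\ga}$ obtained from \eqref{eq-est-ML0} via $x^\ga/(1+x)\le1$, and squaring/summing to get \eqref{eq-est1} with $C_1=\sqrt2\,C_0$. The only difference is that the paper delegates both the solution formula and the sectorial analyticity (the very point you flag as delicate) to Sakamoto--Yamamoto \cite[Theorems 2.1--2.2]{SY} rather than reproving them.
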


\begin{proof}
It follows from \cite{SY} that the solution to \eqref{eq-ibvp-u0} takes the form
\begin{equation}\label{eq-sol0}
u(\,\cdot\,,t)=\sum_{j=0}^1t^j\sum_{n=1}^\infty E_{\al,j+1}(-\la_n t^\al)(u_j,\vp_n)\vp_n.
\end{equation}
Employing the estimate \eqref{eq-est-ML0} in Lemma~\ref{lem-asymp}, we estimate
\begin{align*}
\|u(\,\cdot\,,t)\|_{\cD(\cA^{\be+\ga})}^2 & =\sum_{n=1}^\infty\la_n^{2(\be+\ga)}\left|\sum_{j=0}^1t^j E_{\al,j+1}(-\la_n t^\al)(u_j,\vp_n)\right|^2\\
& \le2\sum_{j=0}^1t^{2j}\sum_{n=1}^\infty|\la_n^\ga E_{\al,j+1}(-\la_n t^\al)|^2\left|\la_n^\be(u_j,\vp_n)\right|^2\\
& \le2\sum_{j=0}^1t^{2j}\sum_{n=1}^\infty\left(\f{C_0(\la_n t^\al)^\ga}{1+\la_n t^\al}t^{-\al\ga}\right)^2\left|\la_n^\be(u_j,\vp_n)\right|^2\\
& \le2\left(C_0\,t^{-\al\ga}\right)^2\sum_{j=0}^1\left(\|u_j\|_{\cD(\cA^\be)}t^j\right)^2\\
& \le2\left(C_0\sum_{j=0}^1\|u_j\|_{\cD(\cA^\be)}t^{j-\al\ga}\right)^2.
\end{align*}
Then we arrive at \eqref{eq-est1} by simply putting $C_1=\sqrt2\,C_0$. In particular, taking $\ga=0$ in \eqref{eq-est1} immediately yields $u\in L^\infty(0,T;\cD(\cA^\be))$.

Finally, the analyticity of $u(\,\cdot\,,t)$ with respect to $t$ in $\cD(\cA^{\be+1})$ can be proved by the same argument as that of \cite[Theorem 2.1]{SY}.\qed
\end{proof}

Next, we consider the inhomogeneous problem with a general source term:
\begin{equation}\label{eq-ibvp-w}
\begin{cases}
(\pa_t^\al+\cA)w=F & \mbox{in }\Om\times(0,T),\\
w=\pa_t w=0 & \mbox{in }\Om\times\{0\},\\
w=0 & \mbox{on }\pa\Om\times(0,T).
\end{cases}
\end{equation}

\begin{lemma}\label{lem-reg2}
Fix constants $\be\ge0,$ $p\in[1,\infty]$ arbitrarily and assume $F\in L^p(0,T;$ $\cD(\cA^\be))$.

{\rm(a)} If $p=2,$ then there exist a unique solution $w\in L^p(0,T;$ $\cD(\cA^{\be+1}))$ to \eqref{eq-ibvp-w} and a constant $C_2>0$ depending only on $\al,T$ such that
\[
\|w\|_{L^p(0,T;\cD(\cA^{\be+1}))}\le C_2\|F\|_{L^p(0,T;\cD(\cA^\be))}.
\]

{\rm(b)} If $p\ne2,$ then there exist a unique solution $w\in L^p(0,T;$ $\cD(\cA^{\be+\ga}))$ to \eqref{eq-ibvp-w} for any $\ga\in[0,1)$ and a constant $C_2'>0$ depending only on $\al,T$ such that
\[
\|w\|_{L^p(0,T;\cD(\cA^{\be+\ga}))}\le\f{C_2'}{1-\ga}\|F\|_{L^p(0,T;\cD(\cA^\be))}.
\]
\end{lemma}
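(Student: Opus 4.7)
\textbf{Proof plan for Lemma~\ref{lem-reg2}.}

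My plan is to exploit the spectral representation of $w$ and reduce both parts to convolution estimates in time for scalar kernels built from $E_{\al,\al}$. First, I would project \eqref{eq-ibvp-w} onto the eigenbasis $\{\vp_n\}$ to reduce matters to the fractional ODE $\pa_t^\al w_n+\la_n w_n=F_n$, $w_n(0)=w_n'(0)=0$, with $F_n(t):=(F(\,\cdot\,,t),\vp_n)$, whose solution (via Laplace transform) is
\[
w_n(t)=\int_0^t(t-s)^{\al-1}E_{\al,\al}(-\la_n(t-s)^\al)F_n(s)\,\rd s,
\]
so that formally $w(\,\cdot\,,t)=\sum_n w_n(t)\vp_n$. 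The \emph{a priori} estimates below will simultaneously justify the convergence of this series in the stated spaces (yielding existence), while uniqueness will follow by applying Lemma~\ref{lem-reg1} to the difference of two solutions.

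\emph{Case (a), $p=2$.} Here I would swap the $\ell^2$-sum with the $L^2$-time integral via Parseval. Writing $K_n(u):=\la_n u^{\al-1}E_{\al,\al}(-\la_n u^\al)$, one has $\la_n w_n=K_n*F_n$ and hence
\[
\|w\|_{L^2(0,T;\cD(\cA^{\be+1}))}^2=\sum_n\la_n^{2\be}\|K_n*F_n\|_{L^2(0,T)}^2.
\]
The key step is a uniform-in-$n$ bound $\|K_n\|_{L^1(0,T)}\le C$, which I would obtain from the improved estimate $|E_{\al,\al}(-\eta)|\le C_0/(1+\eta^2)$ in \eqref{eq-est-ML0} via the substitution $y=\la_n u^\al$:
\[
\|K_n\|_{L^1(0,T)}\le\f1\al\int_0^\infty|E_{\al,\al}(-y)|\,\rd y\le\f{C_0}\al\int_0^\infty\f{\rd y}{1+y^2}<\infty.
\]
Young's convolution inequality followed by Parseval then concludes this case.

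\emph{Case (b), $p\ne2$.} Parseval is unavailable, so I would instead control the $\ell^2$-sum \emph{pointwise in $t$} before integrating in time. Applying Minkowski's integral inequality in $\ell^2_n$ gives
\[
\|w(\,\cdot\,,t)\|_{\cD(\cA^{\be+\ga})}\le\int_0^t\Bigl(\sum_n\la_n^{2\ga}|k_n(t-s)|^2|\la_n^\be F_n(s)|^2\Bigr)^{1/2}\rd s
\]
with $k_n(u):=u^{\al-1}E_{\al,\al}(-\la_n u^\al)$; setting $\eta=\la_n u^\al$ and combining \eqref{eq-est-ML0} with the elementary fact that $\eta^\ga/(1+\eta^2)$ is uniformly bounded on $[0,\infty)$ for $\ga\in[0,1]$, one arrives at the pointwise kernel bound
\[
\la_n^\ga u^{\al-1}|E_{\al,\al}(-\la_n u^\al)|\le C u^{\al(1-\ga)-1}
\]
independently of $n$. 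For $\ga<1$ this scalar kernel lies in $L^1(0,T)$ with norm $T^{\al(1-\ga)}/(\al(1-\ga))$, and Young's inequality in time will then produce the stated estimate with the factor $(1-\ga)^{-1}$.

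\emph{Main obstacle.} The delicate point will be the endpoint $\ga=1$, admissible in Case (a) but \emph{not} in Case (b): at $\ga=1$ the uniform pointwise kernel bound degenerates to $C/u$, which fails to be integrable near $u=0$. Parseval is what rescues this endpoint when $p=2$, since only the (uniform) $L^1$-norm of the mode-dependent kernels $K_n$---rather than a pointwise dominating kernel---is needed there. Clarifying this tradeoff between spatial regularity and temporal integrability, and why it breaks exactly at $\ga=1$ when $p\ne2$, is the main conceptual content of the proof.
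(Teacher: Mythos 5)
Your plan for part (a) is essentially identical to the paper's proof: the same spectral solution formula, the same uniform-in-$n$ bound $\int_0^T\la_n t^{\al-1}|E_{\al,\al}(-\la_n t^\al)|\,\rd t\le\f1\al\int_0^\infty|E_{\al,\al}(-\eta)|\,\rd\eta\le\f{\pi C_0}{2\al}$ obtained from the improved estimate \eqref{eq-est-ML0} with $\be=\al$, and the same conclusion via Young's convolution inequality. The paper omits the proof of part (b), referring to earlier works, but your Minkowski-plus-pointwise-kernel argument (yielding the $u^{\al(1-\ga)-1}$ kernel and the $(1-\ga)^{-1}$ blow-up) is precisely the standard argument those references use, and your explanation of why the endpoint $\ga=1$ survives only for $p=2$ is correct.
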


The results of the above lemma inherits those of \cite[Theorem 2.2(b)]{LLY15}, \cite[Lemma 2.3(a)]{LHY21} and \cite[Theorem 5(ii)]{LHL22}. More precisely, the improvement of the spatial regularity of the solution can reach $2$ only for $p=2$, which is strictly smaller than $2$ if $p\ne2$. The proof of Lemma~\ref{lem-reg2}(b) resembles those in the above references and we omit the proof. However, recall that for $p=2$, the proof e.g. in \cite{LLY15} relies on the positivity of $E_{\al,\al}(-\eta)$ for $\al\in(0,1)$ and $\eta\ge0$. Since such positivity no longer holds for $\al\in(1,2)$, we shall give an alternative proof for Lemma~\ref{lem-reg2}(a).

\begin{proof}[{Proof of Lemma~\ref{lem-reg2}(a)}]
Based on the solution formula (see \cite[Theorem 2.2]{SY})
\begin{equation}\label{eq-sol1}
w(\,\cdot\,,t)=\sum_{n=1}^\infty\left(\int_0^t(t-s)^{\al-1}E_{\al,\al}(-\la_n(t-s)^\al)(F(\,\cdot\,,s),\vp_n)\,\rd s\right)\vp_n,
\end{equation}
we employ Young's convolution inequality to estimate
\begin{align*}
& \quad\,\|w\|_{L^2(0,T;\cD(\cA^{\be+1}))}^2=\int_0^T\|w(\,\cdot\,,t)\|_{\cD(\cA^{\be+1})}^2\,\rd t\\
& =\int_0^T\sum_{n=1}^\infty\left|\la_n^{\be+1}\int_0^t(t-s)^{\al-1}E_{\al,\al}(-\la_n(t-s)^\al)(F(\,\cdot\,,s),\vp_n)\,\rd s\right|^2\rd t\\
& =\sum_{n=1}^\infty\int_0^T\left|\int_0^t\la_n^{\be+1}(t-s)^{\al-1}E_{\al,\al}(-\la_n(t-s)^\al)(F(\,\cdot\,,s),\vp_n)\,\rd s\right|^2\rd t\\
& \le\sum_{n=1}^\infty\left(\int_0^T\la_n t^{\al-1}|E_{\al,\al}(-\la_n t^\al)|\,\rd t\right)^2\int_0^T\left|\la_n^\be(F(\,\cdot\,,t),\vp_n)\right|^2\rd t.
\end{align*}
It suffices to show the uniform boundedness of $\int_0^T\la_n t^{\al-1}|E_{\al,\al}(-\la_n t^\al)|\,\rd t$ for all $n=1,2,\ldots$. Indeed, performing integration by substitution $\eta=\la_n t^\al$ and utilizing the estimate \eqref{eq-est-ML0} with $\be=\al$ in Lemma~\ref{lem-asymp}, we calculate
\begin{align*}
\int_0^T\la_n t^{\al-1}|E_{\al,\al}(-\la_n t^\al)|\,\rd t & =\f1\al\int_0^{\la_n T^\al}|E_{\al,\al}(-\eta)|\,\rd\eta\le\f1\al\int_0^\infty|E_{\al,\al}(-\eta)|\,\rd\eta\\
& \le\f{C_0}\al\int_0^\infty\f{\rd\eta}{1+\eta^2}=\f{\pi C_0}{2\al}=:C_2.
\end{align*}
Then we immediately obtain
\[
\|u\|_{L^2(0,T;\cD(\cA^{\be+1}))}^2\le C_2^2\sum_{n=1}^\infty\int_0^T\left|\la_n^\be(F(\,\cdot\,,t),\vp_n)\right|^2\rd t=\left(C_2\|F\|_{L^2(0,T;\cD(\cA^\be))}\right)^2,
\]
which completes the proof of Lemma~\ref{lem-reg2}(a).\qed
\end{proof}

Now we discuss the fractional Duhamel's principle which connects the inhomogeneous problem \eqref{eq-ibvp-u1} and the homogeneous one \eqref{eq-ibvp-u0}. Concerning Duhamel's principle for time-fractional partial differential equations, there already exist plentiful results especially for time-fractional diffusion equations, and we refer e.g. to \cite{LRY,L17,Um}. For general $\al>0$, Hu, Liu and Yamamoto \cite[Lemma 5.2]{HLY20} established a fractional Duhamel's principle for \eqref{eq-ibvp-w} with a smooth source term $F$. Here we provide a result for \eqref{eq-ibvp-u1} with a non-smooth source term.

\begin{lemma}[Fractional Duhamel's principle]\label{lem-Duhamel}
Fix constants $\be\ge0,$ $p\in[1,\infty]$ arbitrarily and assume $\rho\in L^p(0,T),$ $f\in\cD(\cA^\be)$. Let $\ga=1$ for $p=2$ and $\ga\in[0,1)$ be arbitrary for $p\ne2$. Then for the solution $u$ to $\eqref{eq-ibvp-u1},$ there holds
\begin{equation}\label{eq-Duhamel}
J^{2-\al}u(\,\cdot\,,t)=\int_0^t\rho(s)v(\,\cdot\,,t-s)\,\rd s\quad\mbox{in }L^p(0,T;\cD(\cA^{\be+\ga})),
\end{equation}
where $v$ solves the homogeneous problem
\begin{equation}\label{eq-ibvp-v}
\begin{cases}
(\pa_t^\al+\cA)v=0 & \mbox{in }\Om\times(0,T),\\
v=0,\ \pa_t v=f & \mbox{in }\Om\times\{0\},\\
v=0 & \mbox{on }\pa\Om\times(0,T).
\end{cases}
\end{equation}
\end{lemma}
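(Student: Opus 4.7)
My approach is to verify the identity termwise in the eigenbasis $\{\vp_n\}$ and then assemble the pieces into an identity in $L^p(0,T;\cD(\cA^{\be+\ga}))$. First I would write the solution to \eqref{eq-ibvp-u1}, where $F(\bm x,s)=\rho(s)f(\bm x)$, via formula \eqref{eq-sol1}:
\[
u(\,\cdot\,,t)=\sum_{n=1}^\infty(f,\vp_n)\left(\int_0^t(t-s)^{\al-1}E_{\al,\al}(-\la_n(t-s)^\al)\,\rho(s)\,\rd s\right)\vp_n,
\]
and the solution to \eqref{eq-ibvp-v} via \eqref{eq-sol0} with $u_0=0$, $u_1=f$:
\[
v(\,\cdot\,,t)=\sum_{n=1}^\infty(f,\vp_n)\,t\,E_{\al,2}(-\la_n t^\al)\,\vp_n.
\]
The goal then reduces to showing that, after convolution with $\rho$, the temporal kernel $(t-s)^{\al-1}E_{\al,\al}(-\la_n(t-s)^\al)$ becomes $(t-s)E_{\al,2}(-\la_n(t-s)^\al)$ when one applies $J^{2-\al}$.

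The key single-mode computation is the Riemann--Liouville identity
\[
J^{2-\al}\!\bigl[t^{\al-1}E_{\al,\al}(-\la t^\al)\bigr]=t\,E_{\al,2}(-\la t^\al),\qquad \la>0,
\]
which I would obtain directly from the series expansion of $E_{\al,\al}$ by termwise action of $J^{2-\al}$ on the monomials $t^{\al k+\al-1}$, using $J^{\be}t^{\mu-1}=\frac{\Ga(\mu)}{\Ga(\mu+\be)}t^{\mu+\be-1}$; the ratio of Gamma factors $\Ga(\al k+\al)/\Ga(\al k+2)$ exactly rebuilds the coefficients of $tE_{\al,2}(-\la t^\al)$. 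Termwise integration is justified because the resulting series for $tE_{\al,2}$ is entire in $t$.

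Next I would exchange $J^{2-\al}$ with the $\rho$-convolution by Fubini (this is the associativity of convolution: $J^{2-\al}\circ(\,\cdot\,*\rho)=(J^{2-\al}\,\cdot\,)*\rho$), and then exchange $J^{2-\al}$ with the eigenfunction summation in $n$. The latter swap requires showing that the partial sums of $u$ converge to $u$ in a space where $J^{2-\al}$ acts continuously into $L^p(0,T;\cD(\cA^{\be+\ga}))$. To that end I would use Lemma~\ref{lem-reg2} (for either $p=2,\ \ga=1$ or $p\ne2,\ \ga\in[0,1)$) to control the truncated sums in $L^p(0,T;\cD(\cA^{\be+\ga}))$ uniformly in $n$, and apply Young's convolution inequality to the kernel $t^{1-\al}/\Ga(2-\al)$ of $J^{2-\al}$ on $(0,T)$, which is in $L^1(0,T)$ since $2-\al>0$; this guarantees $J^{2-\al}$ maps $L^p(0,T;X)$ continuously into itself for any Banach space $X$.

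After these exchanges, both sides of \eqref{eq-Duhamel} have the same eigenfunction expansion
\[
\sum_{n=1}^\infty(f,\vp_n)\left(\int_0^t(t-s)E_{\al,2}(-\la_n(t-s)^\al)\,\rho(s)\,\rd s\right)\vp_n,
\]
and the equality follows. I expect the main technical obstacle to be the rigorous justification of the series swap in the low-regularity case $p\ne2$: there one only has $\ga<1$, and one must combine the bound from Lemma~\ref{lem-reg2}(b) with the Young estimate for $J^{2-\al}$ to bound the tails $\sum_{n\ge N}$ in $L^p(0,T;\cD(\cA^{\be+\ga}))$, letting $N\to\infty$. The case $p=2$ is cleaner because Parseval makes the tail estimate immediate once the temporal integral $\int_0^T\la_n t^{\al-1}|E_{\al,\al}(-\la_n t^\al)|\,\rd t$ is bounded uniformly in $n$, as already established in the proof of Lemma~\ref{lem-reg2}(a).
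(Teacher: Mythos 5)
Your proposal is correct and follows essentially the same route as the paper: both rest on the single-mode identity $J^{2-\al}\bigl[t^{\al-1}E_{\al,\al}(-\la_n t^\al)\bigr]=t\,E_{\al,2}(-\la_n t^\al)$ (proved via the same termwise Gamma/Beta computation, which the paper carries out after convolving with $\rho$ and you carry out on the kernel before invoking associativity of convolution), and both justify the interchanges using Lemmas~\ref{lem-reg1}--\ref{lem-reg2} together with Young's convolution inequality for the $L^1$ kernel of $J^{2-\al}$.
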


\begin{proof}
First we confirm that the both sides of \eqref{eq-Duhamel} lie in $L^p(0,T;\cD(\cA^{\be+\ga}))$ for $\ga$ assumed in Lemma~\ref{lem-Duhamel}. In fact, since $\rho f\in L^p(0,T;\cD(\cA^\be))$, it follows from Lemma~\ref{lem-reg2} that $u\in L^p(0,T;\cD(\cA^{\be+\ga}))$. Next, it is readily seen from Young's convolution inequality that $J^{2-\al}:L^p(0,T)\longrightarrow L^p(0,T)$ is a bounded linear operator, which implies $J^{2-\al}u\in L^p(0,T;\cD(\cA^{\be+\ga}))$.

On the other hand, applying Lemma~\ref{lem-reg1} to \eqref{eq-ibvp-v} yields
\[
\|v(\,\cdot\,,t)\|_{\cD(\cA^{\be+\ga})}\le C_1\|f\|_{\cD(\cA^\be)}t^{1-\al\ga},\quad t>0,\ \forall\,\ga\in[0,1].
\]
Since $1-\al\ga>-1$, we have $v\in L^1(0,T;\cD(\cA^{\be+\ga}))$ for any $\ga\in[0,1]$. Therefore, we see that the right-hand side of \eqref{eq-Duhamel} also makes sense in $L^p(0,T;\cD(\cA^{\be+\ga}))$ for any $\ga\in[0,1]$ by $\rho\in L^p(0,T)$ and again Young's convolution inequality.

Now we can proceed to verify the identity \eqref{eq-Duhamel} by brute-force calculation based on the solution formulae, because the possibility of exchanging the involved summations and integrals is guaranteed by the above argument. According to \eqref{eq-sol1}, we know
\begin{align}
u(\,\cdot\,,t) & =\sum_{n=1}^\infty\left(\int_0^t(t-s)^{\al-1}E_{\al,\al}(-\la_n(t-s)^\al)(\rho(s)f,\vp_n)\,\rd s\right)\vp_n\nonumber\\
& =\sum_{n=1}^\infty\mu_n(t)(f,\vp_n)\vp_n,\label{eq-rep}
\end{align}
where
\[
\mu_n(t):=\int_0^t(t-s)^{\al-1}E_{\al,\al}(-\la_n(t-s)^\al)\rho(s)\,\rd s.
\]
Then by the definitions of $J^{2-\al}$ and $E_{\al,\al}(\,\cdot\,)$, we calculate
\begin{align}
& \quad\,J^{2-\al}\mu_n(t)=\f1{\Ga(2-\al)}\int_0^t(t-s)^{1-\al}\mu_n(s)\,\rd s\nonumber\\
& =\f1{\Ga(2-\al)}\int_0^t(t-s)^{1-\al}\left(\int_0^s(s-\tau)^{\al-1}E_{\al,\al}(-\la_n(s-\tau)^\al)\rho(\tau)\,\rd\tau\right)\rd s\label{eq-exchange}\\
& =\f1{\Ga(2-\al)}\int_0^t\rho(\tau)\left(\int_\tau^t(t-s)^{1-\al}(s-\tau)^{\al-1}\sum_{k=0}^\infty\f{(-\la_n(s-\tau)^\al)^k}{\Ga(\al k+\al)}\,\rd s\right)\rd\tau\nonumber\\
& =\f1{\Ga(2-\al)}\int_0^t\rho(\tau)\left(\sum_{k=0}^\infty\f{(-\la_n)^k}{\Ga(\al(k+1))}\int_\tau^t(t-s)^{1-\al}(s-\tau)^{\al(k+1)-1}\,\rd s\right)\rd\tau,\nonumber
\end{align}
where we exchanged the order of integration in \eqref{eq-exchange}. For the inner integral above, we perform integration by substitution $s=\te(t-\tau)+\tau$ ($0<\te<1$) to calculate
\begin{align*}
\int_\tau^t(t-s)^{1-\al}(s-\tau)^{\al(k+1)-1}\,\rd s & =(t-\tau)^{\al k+1}\int_0^1(1-\te)^{1-\al}\te^{\al(k+1)-1}\,\rd\te\\
& =(t-\tau)^{\al k+1}\f{\Ga(2-\al)\Ga(\al(k+1))}{\Ga(\al k+2)}.
\end{align*}
Hence, by the definition of $E_{\al,2}(\,\cdot\,)$, we obtain
\begin{align*}
J^{2-\al}\mu_n(t) & =\int_0^t\rho(\tau)\sum_{k=0}^\infty\f{(-\la_n)^k}{\Ga(\al k+2)}(t-\tau)^{\al k+1}\,\rd\tau\\
& =\int_0^t\rho(s)(t-s)E_{\al,2}(-\la_n(t-s)^\al)\,\rd s.
\end{align*}
Therefore, we perform $J^{2-\al}$ on both sides of \eqref{eq-rep} and substitute the above equality to obtain
\begin{align*}
J^{2-\al}u(\,\cdot\,,t) & =\sum_{n=1}^\infty J^{2-\al}\mu_n(t)(f,\vp_n)\vp_n\\
& =\sum_{n=1}^\infty\left(\int_0^t\rho(s)(t-s)E_{\al,2}(-\la_n(t-s)^\al)\,\rd s\right)(f,\vp_n)\vp_n\\
& =\int_0^t\rho(s)\left((t-s)\sum_{n=1}^\infty E_{\al,2}(-\la_n(t-s)^\al)(f,\vp_n)\vp_n\right)\rd s.
\end{align*}
Then we reach the conclusion by applying the solution formula \eqref{eq-sol0} to $v$.\qed
\end{proof}


\section{Proofs of Main Results}\label{sec-proof}

\begin{proof}[Proof of Theorem~\ref{thm-asymp}]
Since $u(\,\cdot\,,t)\in\cD(\cA^{\be+1})$ by Lemma~\ref{lem-reg1} with $\ga=1$, we know that the left-hand side of \eqref{eq-asymp} makes sense for $t>0$. By the solution formula \eqref{eq-sol0} and
\[
\cA^{-1}u_j=\sum_{n=1}^\infty\f{(u_j,\vp_n)}{\la_n}\vp_n,\quad j=0,1,
\]
we obtain
\begin{align*}
& \quad\,u(\,\cdot\,,t)-\sum_{j=0}^1t^{j-\al}\f{\cA^{-1}u_j}{\Ga(j+1-\al)}\\
& =\sum_{j=0}^1t^j\sum_{n=1}^\infty\left(E_{\al,j+1}(-\la_n t^\al)-\f1{\Ga(j+1-\al)\,\la_n t^\al}\right)(u_j,\vp_n)\vp_n.
\end{align*}
Then we employ the estimate \eqref{eq-est-ML1} in Lemma~\ref{lem-asymp} to estimate
\begin{align*}
& \quad\,\left\|u(\,\cdot\,,t)-\sum_{j=0}^1\f{\cA^{-1}u_j}{\Ga(j+1-\al)}t^{j-\al}\right\|_{\cD(\cA^{\be+1})}^2\\
& =\sum_{n=1}^\infty\la_n^{2(\be+1)}\left|\sum_{j=0}^1t^j\left(E_{\al,j+1}(-\la_n t^\al)-\f1{\Ga(j+1-\al)\,\la_n t^\al}\right)(u_j,\vp_n)\right|^2\\
& \le2\sum_{j=0}^1t^{2j}\sum_{n=1}^\infty\la_n^2\left|E_{\al,j+1}(-\la_n t^\al)-\f1{\Ga(j+1-\al)\,\la_n t^\al}\right|^2\left|\la_n^\be(u_j,\vp_n)\right|^2\\
& \le2\left(C_0'\,t^{-2\al}\right)^2\sum_{j=0}^1t^{2j}\sum_{n=1}^\infty\left|\la_n^{\be-1}(u_j,\vp_n)\right|^2\le2\left(C_0'\sum_{j=0}^1\|u_j\|_{\cD(\cA^{\be-1})}t^{j-2\al}\right)^2
\end{align*}
for $t\gg1$. Then we can conclude \eqref{eq-asymp} by simply putting $C_\al=\sqrt2\,C_0'$.\qed
\end{proof}

\begin{proof}[Proof of Corollary~\ref{cor-asymp}]
We have from \eqref{eq-cond-be} that $2(\be+1)>d/2$ and the Sobolev embedding yields $\cD(\cA^{\be+1})\subset H^{2(\be+1)}\subset C(\ov\Om)$ (e.g. Adams \cite{Ad}), indicating that $\cA^{-1}u_j$ ($j=0,1$) and $u(\,\cdot\,,t)$ ($t>0$) make pointwise sense. Then Theorem~\ref{thm-asymp} yields for any $\bm x\in\Om$ that
\begin{align*}
& \quad\,\left|u(\bm x,t)-\sum_{j=0}^1\f{\cA^{-1}u_j(\bm x)}{\Ga(j+1-\al)}t^{j-\al}\right|\le\left\|u(\,\cdot\,,t)-\sum_{j=0}^1\f{\cA^{-1}u_j}{\Ga(j+1-\al)}t^{j-\al}\right\|_{C(\ov\Om)}\\
& \le C_\Om\left\|u(\,\cdot\,,t)-\sum_{j=0}^1\f{\cA^{-1}u_j}{\Ga(j+1-\al)}t^{j-\al}\right\|_{\cD(\cA^{\be+1})}\le C\sum_{j=0}^1\|u_j\|_{\cD(\cA^{\be-1})}t^{j-2\al}
\end{align*}
for $t\gg1$, where $C_\Om>0$ is the Sobolev embedding constant depending only on $\Om$ and $C:=C_\Om C_\al$ depends only on $\Om$ and $\al$. This implies
\begin{align}
u(\bm x,t) & \ge\sum_{j=0}^1\f{\cA^{-1}u_j(\bm x)}{\Ga(j+1-\al)}t^{j-\al}-C\sum_{j=0}^1\|u_j\|_{\cD(\cA^{\be-1})}t^{j-2\al},\label{eq-pos}\\
u(\bm x,t) & \le\sum_{j=0}^1\f{\cA^{-1}u_j(\bm x)}{\Ga(j+1-\al)}t^{j-\al}+C\sum_{j=0}^1\|u_j\|_{\cD(\cA^{\be-1})}t^{j-2\al}\nonumber
\end{align}
for $t\gg1$. Recall the relation $0>1-\al>-\al>1-2\al>-2\al$ by $1<\al<2$.\medskip

(a) Without loss of generality, we only consider the case of $\cA^{-1}u_0(\bm x)<0$ because the opposite case can be studied in identically the same manner.

Substituting $\cA^{-1}u_1(\bm x)=0$ into \eqref{eq-pos} yields
\begin{equation}\label{eq-pos0}
u(\bm x,t)\ge\f{\cA^{-1}u_0(\bm x)}{\Ga(1-\al)}t^{-\al}-C\sum_{j=0}^1\|u_j\|_{\cD(\cA^{\be-1})}t^{j-2\al}\quad\mbox{for }t\gg1.
\end{equation}
Together with the fact that $\Ga(1-\al)<0$, we see $\f{\cA^{-1}u_0(\bm x)}{\Ga(1-\al)}>0$ and thus there exists $T_0\gg1$ depending on $\al,\Om,\cA,u_0,u_1,\bm x$ such that the right-hand side of \eqref{eq-pos0} keeps strictly positive for all $t>T_0$.

As for the special situation e.g. of $u_0\not\equiv0,\le0$ in $\Om$, the strong maximum principle for the elliptic operator $\cA$ (see Gilbarg and Trudinger \cite[Chapter 3]{GT}) guarantees $\cA^{-1}u_0<0$ in $\Om$. This completes the proof of (a).\medskip

(b) Likewise, it suffices to consider the case of $\cA^{-1}u_1(\bm x)>0$ without loss of generality. Now it follows from \eqref{eq-pos} that
\begin{equation}\label{eq-pos1}
u(\bm x,t)\ge\f{\cA^{-1}u_1(\bm x)}{\Ga(2-\al)}t^{1-\al}-\left|\f{\cA^{-1}u_0(\bm x)}{\Ga(1-\al)}\right|t^{-\al}-C\sum_{j=0}^1\|u_j\|_{\cD(\cA^{\be-1})}t^{j-2\al}
\end{equation}
for $t\gg1$. Then by $\Ga(2-\al)>0$, similarly we conclude the existence of $T_1\gg1$ depending on $\al,\Om,\cA,u_0,u_1,\bm x$ such that the right-hand side of \eqref{eq-pos1} keeps strictly positive for all $t> T_1$. This completes the proof of (b).\qed
\end{proof}

\begin{proof}[Proof of Theorem~\ref{thm-isp}]
We turn to the fractional Duhamel's principle \eqref{eq-Duhamel} and the corresponding homogeneous problem \eqref{eq-ibvp-v} in Lemma~\ref{lem-Duhamel}. Since the identity \eqref{eq-Duhamel} holds in $L^p(0,T;\cD(\cA^{\be+\ga}))$ for any $\ga\in[0,1)$ and $\be$ satisfies \eqref{eq-cond-be}, one can choose $\ga$ sufficiently close to $1$ such that $2(\be+\ga)>d/2$. Then by the Sobolev embedding $\cD(\cA^{\be+\ga})\subset H^{2(\be+\ga)}(\Om)\subset C(\ov\Om)$, it reveals that \eqref{eq-Duhamel} makes sense in $L^p(0,T;C(\ov\Om))$. This allows us to substitute $\bm x=\bm x_0$ into \eqref{eq-Duhamel} to obtain
\[
0=J^{2-\al}u(\bm x_0,t)=\int_0^t\rho(s)v(\bm x_0,t-s)\,\rd s\quad\mbox{in }L^p(0,T).
\]

Now we are well prepared to apply the Titchmarsh convolution theorem (see \cite{Ti}) to conclude the existence of a constant $t_*\in[0,T]$ such that
\[
v(\bm x_0,\,\cdot\,)\equiv0\mbox{ in }(0,t_*)\quad\mbox{and}\quad\rho\equiv0\mbox{ in }(0,T-t_*).
\]
It remains to verify $t_*=0$ by the argument of contradiction. If $t_*>0$ instead, then the analyticity of $v(\,\cdot\,,t)\in\cD(\cA^{\be+1})\subset C(\ov\Om)$ with respect to $t$ indicates $v(\bm x_0,\,\cdot\,)\equiv0$ in $(0,\infty)$. However, owing to the assumption $\cA^{-1}f(\bm x_0)\ne0$, it follows from Corollary~\ref{cor-asymp}(b) that there exists $T_1\gg1$ such that
\[
v(\bm x_0,\,\cdot\,)>0\quad\mbox{or}\quad v(\bm x_0,\,\cdot\,)<0\quad\mbox{in }(T_1,\infty),
\]
which is a contradiction. Consequently, there should hold $t_*=0$, i.e., $\rho\equiv0$ in $(0,T)$.\qed
\end{proof}


\section{Conclusion}\label{sec-remark}

The results obtained in this article reflect both similarity and difference between time-fractional wave and diffusion equations. Owing to the existence of two initial values, the evolution of solutions $u$ to homogeneous time-fractional wave equations becomes more complicated and interesting than that for $0<\al<1$. Motivated by the asymptotic property of Mittag-Leffler functions, we capture the behavior of $u(\,\cdot\,,t)$ for $t\gg1$ in view of Theorem~\ref{thm-asymp} and Corollary~\ref{cor-asymp}. Meanwhile, it reveals that the uniqueness of inverse $t$-source problems like Problem~\ref{isp} only requires the long-time non-vanishing and the time-analyticity of $u$, which was neglected in the proof for $0<\al<1$.

We close this article with some conjectures on the number of sign changes of the solution $u(\bm x,t)$ to \eqref{eq-ibvp-u0} for any $\bm x\in\Om$. Fixing initial values $u_0,u_1$, from Figure~\ref{fig-ML} one can see the monotone increasing of such a number with respect to $\al$, but there seems no rigorous proof yet. Further, if either of $u_0,u_1$ vanishes and the other keeps sign, it seems possible to prove the parity of the number of sign changes, i.e., odd if $u_1\equiv0$ and even if $u_0\equiv0$. This also relates with the distribution of zeros of $u$, which can be another future topic.\bigskip


{\bf Acknowledgement}\ \ 
X.\! Huang is supported by Grant-in-Aid for JSPS Fellows 20F20319, JSPS.
Y.\! Liu is supported by Grant-in-Aid for Early Career Scientists 20K14355 and 22K13954, JSPS.
%
%

\end{document}